\title{\vspace{-1cm}A telescopic proof of Cayley's formula}
\author{Guillaume Chapuy%
\thanks{Université Paris Cité, CNRS, IRIF, F-75013, Paris, France
Email:~{\tt guillaume.chapuy@irif.fr}.
}
\and Guillem Perarnau%
\thanks{Departament de Matem\`atiques and IMTECH, Universitat Polit\`ecnica de Catalunya (UPC), Barcelona, Spain. Centre de Recerca Matemàtica, Barcelona, Spain. Email:~{\tt guillem.perarnau@upc.edu}. 
}
}
\date{\today}
\theoremstyle{theorem}
\newtheorem{theorem}{Theorem}
\numberwithin{theorem}{section}
\theoremstyle{definition}
\begin{document}

\maketitle

\vspace{-10mm}

\begin{abstract} 
	We give a short proof of the fact that the number of labelled trees on $n$ vertices is $n^{n-2}$.

	Although many short proofs are known, we have not seen this one before.
\end{abstract}

\addtocounter{section}{1}

Let $f:[n]\to[n]$, and let $G_f$ be its associated directed graph, with vertex set $[n]$ and directed edges $(i,f(i))$ for $i \in [n]$. A vertex is \emph{cyclic} if it belongs to a cycle of $G_f$. Functions $f$ having a unique cyclic vertex are in bijection with rooted trees on $[n]$ (the unique cyclic vertex is the root, and edges are oriented towards the root, see Figure~\ref{fig:cayleyTelescopic}).

Cayley's formula~\cite{Borchardt, Cayley} asserts that there are $n^{n-2}$ trees on the vertex set $[n]$, or equivalently there are $n^{n-1}$ rooted trees. Therefore it is equivalent to the following statement: 
\begin{theorem}
	The probability that a uniform random function $f:[n]\to [n]$ has a unique cyclic vertex is~$\frac{1}{n}$.
\end{theorem}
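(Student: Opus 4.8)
The plan is to prove a stronger, distributional statement, from which the theorem falls out by a one-line telescoping. Let $C=C(f)$ denote the number of cyclic vertices of $f$, so the event in the theorem is $\{C=1\}$. Computing $\Pr[C=1]$ directly is dangerous: a function with a unique cyclic vertex \emph{is} a rooted tree, so such a count would be exactly the thing we are trying to establish, and any argument risks circularity. Instead I would determine the whole law of $C$. Set $q_k=\frac{n(n-1)\cdots(n-k+1)}{n^k}$ for $1\le k\le n$ and $q_k=0$ for $k>n$, so that $q_1=1$ and $q_k/q_{k-1}=(n-k+1)/n$. The goal is the identity $\Pr[C\ge k]=q_k$ for every $k$. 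Granting it, the theorem is immediate: $\Pr[C=1]=\Pr[C\ge 1]-\Pr[C\ge 2]=q_1-q_2=1-\frac{n-1}{n}=\frac1n$. This is the \emph{telescopic} mechanism — in general $\Pr[C=k]=q_k-q_{k+1}$ — and for Cayley we only ever use the very first difference.

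The reason for phrasing things through $q_k$ is that $q_k$ is visibly the probability of a trivial distinctness event. For independent uniform values, $\Pr[f(1),\dots,f(k)\text{ pairwise distinct}]=\frac{n}{n}\cdot\frac{n-1}{n}\cdots\frac{n-k+1}{n}=q_k$; equivalently, revealing the trajectory $1,f(1),f^2(1),\dots$ as fresh uniform draws gives $\Pr[1,f(1),\dots,f^{k-1}(1)\text{ pairwise distinct}]=q_k$. Hence the entire content reduces to an equinumerosity, $\#\{f:C(f)\ge k\}=\#\{f:f(1),\dots,f(k)\text{ distinct}\}=n(n-1)\cdots(n-k+1)\,n^{n-k}$, which I would establish by an explicit bijection. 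The genuine difficulty is that the two families are of very different natures: ``$C\ge k$'' is a global property of the dynamics of $f$, whereas ``$f(1),\dots,f(k)$ distinct'' constrains only $k$ coordinates.

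To build the bijection I would exploit the fact that $f$ restricted to its set $R$ of cyclic vertices is a permutation of $R$, together with a shift operation rotating along cycles. Informally, when $|R|\ge k$ one reads off $k$ cyclic vertices in a canonical order and transports them, via such a shift, onto the coordinates $1,\dots,k$, so that their (necessarily distinct) images witness the distinctness event; the vertices evicted from $\{1,\dots,k\}$ and the cyclic edges that are broken must be re-encoded so that nothing is lost. \textbf{The main obstacle} is proving that this map is well defined and, above all, invertible — that from a function with $f(1),\dots,f(k)$ distinct one can reconstruct a \emph{unique} function with at least $k$ cyclic vertices. This is exactly where the shift/cycle bookkeeping has to be arranged with care, and I expect it to carry essentially all the technical weight; the reduction and the telescoping around it are routine.

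Finally I would remark that only $k=1$ (trivial) and $k=2$ are actually needed for Cayley's formula, but that setting up the bijection uniformly in $k$ is what exposes the telescoping and makes the single difference $q_1-q_2=\frac1n$ transparent.
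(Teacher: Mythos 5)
Your reduction is sound as far as it goes: the number $C$ of cyclic vertices of a uniform mapping does satisfy $\Pr[C\ge k]=q_k$, and the theorem would indeed follow from the single difference $q_1-q_2=\frac1n$. The problem is that the entire content of the theorem has been pushed into the one step you do not carry out, namely the bijection between $\{f:C(f)\ge k\}$ and $\{f:f(1),\dots,f(k)\text{ distinct}\}$. What you write about it (``one reads off $k$ cyclic vertices in a canonical order and transports them, via such a shift, onto the coordinates $1,\dots,k$\dots the vertices evicted \dots must be re-encoded so that nothing is lost'') is a plan, not a map: the canonical order, the destination of the evicted vertices and of the broken cycle edges, and above all the inverse are all unspecified, and you yourself flag this as carrying ``essentially all the technical weight.'' Note also that working uniformly in $k$ buys nothing here: only $k=2$ is used, and the $k=2$ instance of your equinumerosity, $\#\{f:C(f)\ge 2\}=n(n-1)\,n^{n-2}$, is, after complementation, literally Cayley's formula. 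So as it stands the proposal is a correct but trivial reformulation of the statement, plus an unconstructed Joyal-type rewiring; until that bijection is built and inverted (which is doable, but is where all the difficulty lives), there is no proof.

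For comparison, the paper's argument avoids any bijection. It reveals $G_f$ by an exploration in rounds, following the orbit of an arbitrary unexplored vertex until an already-explored vertex is hit; with $T_i$ the number of vertices explored after round $i$, the event ``unique cyclic vertex'' is exactly ``the first round closes with a loop and every later round closes onto a previously explored vertex,'' whose conditional probabilities are $\frac{1}{T_1}$ and $\frac{T_{i-1}}{T_i}$. The product telescopes to $\frac{1}{T_K}=\frac1n$ independently of the exploration history, so no enumeration or rewiring is ever needed. The telescoping there happens inside a single conditional computation over the rounds of one exploration, not across the values of $k$ in the law of $C$; the conditioning on $(T_1,\dots,T_K)$ is precisely what replaces the bijection you would otherwise have to construct.
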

\begin{proof}
We reveal the edges of $G_f$ according to the randomized procedure below. 
We call a vertex \emph{explored} when the edge outgoing from it has already been revealed.
\smallskip

\textit{\hspace{-0.5cm} Set $i:=0$. While there are unexplored vertices, do:
	\begin{itemize}[itemsep=0pt, topsep=0pt, parsep=0pt, leftmargin=25pt]
		\item[(1)] Set $i:=i+1$, and pick arbitrarily a vertex, say $V_i$, among the vertices yet unexplored;
		\item[(2)] explore the \emph{future} of $V_i$ by revealing $f(V_i), f^2(V_i), \dots$ iteratively until an explored vertex is reached.
	\end{itemize}
	}
\medskip

\noindent
We let $T_i$ be the (random) number of vertices explored after the $i$-th round of the procedure, and $K$ the total (random) number of rounds. Note that $T_K=n$.
	The random graph $G_f$ has a unique cyclic vertex if and only if (see Figure~\ref{fig:cayleyTelescopic}):

	\begin{itemize}[itemsep=0pt, topsep=0pt, parsep=0pt, leftmargin=20pt]
		\item[-] the edge revealed at time $T_1$ is a loop. Conditionally to $T_1$, this happens with probability $\frac{1}{T_1}$.
		\item[-] 
for each $i\geq 2$, the edge revealed at time $T_i$ connects to one of the $T_{i-1}$ vertices explored in previous rounds -- since connecting instead to one of the $T_{i}-T_{i-1}$ vertices explored in the present round would create a new cycle. 
		Conditionally to $(T_1,\dots,T_i)$, this happens with probability $\frac{T_{i-1}}{T_i}$.
	\end{itemize}
\noindent
	Therefore, conditionally to $(T_1,\dots,T_K;K)$, the probability that $G_f$ has a unique cyclic vertex is
	$$
	\frac{1}{T_1} \times
	\frac{T_1}{T_2}\times
	\dots \times
	\frac{T_{K-1}}{T_K}= \frac{1}{T_K} =\frac{1}{n}.
	$$
	Since the probability is independent of $(T_1,\dots,T_K;K)$ this is also true unconditionally.
\end{proof}

\vfill
\begin{figure}[h]
	\centering
	\includegraphics[height=4cm]{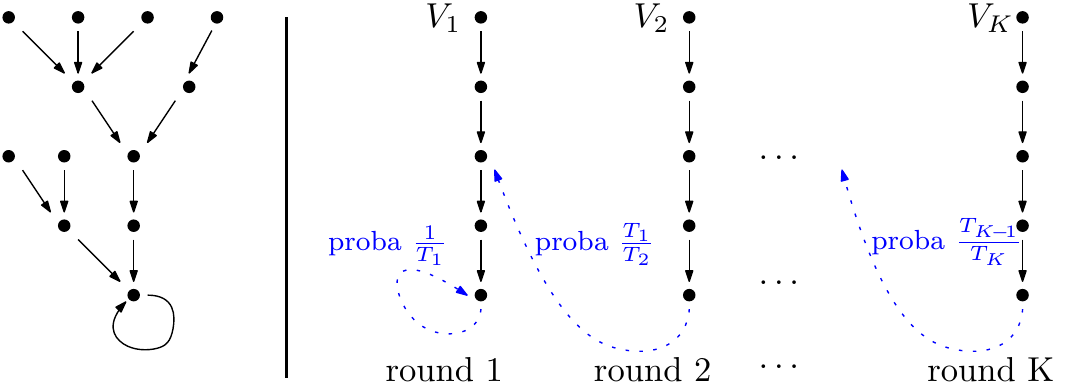}
	\caption{Left: a function $f:[n]\rightarrow [n]$ with a unique cyclic vertex (i.e. a rooted Cayley tree) with $n=12$ and labels in $[n]$ not represented; Right: short proof of Cayley's formula. 
	}
	\label{fig:cayleyTelescopic}
\end{figure} 

\newpage
\noindent \textbf{Comments and origin of the proof.} 
\smallskip

Many beautiful and/or short and/or deep proofs of Cayley's formula are known, and listing them or studying their history and relations is far beyond our aim.

%
%

We found the present proof while writing our recent paper~\cite{GCGP:synchronizing} on synchronization of random automata. Most of that paper studies $w$-trees, a certain class of 2-letter automata which are a generalization of trees. We adapt to $w$-trees the classical Joyal bijection which transforms functions into trees by edge-rewiring (see e.g.~\cite{proofsFromTheBook}), but some of the arguments also require us to construct $w$-trees by recursive explorations. For this, in \cite[Section 10]{GCGP:synchronizing},  we use a telescopic argument which is a weak adaptation to $w$-trees of the construction presented here, which we chose to write separately.





To conclude, we observe that an immediate consequence of our proof is the following well-known fact. Let $H_n$ denote the height of a uniform random vertex in a uniform random rooted tree with $n$ vertices. Then $1+H_n$ has the same law as the time of first repetition in a i.i.d. sequence of uniform variables on~$[n]$ (time of first collision in the coupon collector process). 

\bibliographystyle{alpha}
\bibliography{biblio}

\end{document}